\newtheorem{theorem}{Theorem}[section]
\newtheorem{lemma}[theorem]{Lemma}
\theoremstyle{definition}
\newtheorem{definition}[theorem]{Definition}
\newtheorem{example}[theorem]{Example}
\theoremstyle{remark}
\newtheorem{remark}[theorem]{Remark}
\numberwithin{equation}{section}
\begin{document}
\setcounter{page}{1}

\title[Geometry of integers revisited]{Geometry of integers revisited}

\author[Nikolaev]
{Igor V.  Nikolaev$^1$}

\address{$^{1}$ Department of Mathematics and Computer Science, St.~John's University, 8000 Utopia Parkway,  
New York,  NY 11439, United States.}
\email{\textcolor[rgb]{0.00,0.00,0.84}{igor.v.nikolaev@gmail.com}}


\subjclass[2010]{Primary 11R04, 14H55; Secondary 46L85.}

\keywords{arithmetic geometry, Serre $C^*$-algebra.}


\begin{abstract}
We study   geometry of  the ring of integers $O_K$ of a number field $K$. 
Namely, it is proved that the inclusion $\mathbf{Z}\subset O_K$ defines a covering 
of the Riemann sphere $\mathbf{C}P^1$ ramified over the points $\{0,1,\infty\}$.  
 Our approach is based on the notion of a Serre $C^*$-algebra.   
 As an application, a new   proof of the Belyi Theorem
 is given. 
 \end{abstract}

\maketitle

\section{Introduction}
An interplay  between arithmetic and geometry is well known  [Weil 1949] \cite{Wei1}.  
The  Weil's Conjectures were a  motivation  for the notion  of a scheme
[Grothendieck 1960]  \cite{Gro1}.  Recall that the spectrum $Spec~R$  
of  a commutative ring $R$ is  the set of all prime ideals of $R$ endowed 
with the Zariski topology.   Such a  topology  is non-Hausdorff but admits a 
cohomology theory and an analog of the Lefschetz Fixed-Point Theorem.
The latter is enough to prove Weil's Conjectures. 

Let  $\mathbf{Z}$ be  the ring of integers.  It was noticed long ago  that the  space $Spec~\mathbf{Z}$
 is ``similar'' to   the Riemann sphere $\mathbf{C}P^1$  [Eisenbud \& Harris 1999] \cite[p. 83]{EH}.
Moreover,   if  $O_K$ is the ring of integers 
of a number field $K$,  then  the inclusion $\mathbf{Z}\subset O_K$
corresponds to  a Riemann surface $\mathscr{R}$,  such that 
there exists a ramified covering map
$\mathscr{R}\to\mathbf{C}P^1$. 
The  Grothendieck's  theory of schemes  cannot explain  this analogy 
 [Manin 2006] \cite[Section 2.2]{Man1}.

\medskip
In this note we clarify the relation  between  the ring  $\mathbf{Z}$ and 
the sphere $\mathbf{C}P^1$. 
Namely, it is proved   that  the inclusion  $\mathbf{Z}\subset O_K$ defines  
a  covering $\mathscr{R}\to\mathbf{C}P^1$ ramified over three points $\{0,1,\infty\}$ (theorem \ref{thm1.3}).
Our approach  is based on the notion of a Serre $C^*$-algebra \cite[Section 5.3.1] {N}.   
 To formalize our results, we  need the following definitions. 

\medskip
Let $V$ be a complex projective variety. Denote by $B(V, \mathcal{L}, \sigma)$ the twisted 
homogeneous coordinate ring of $V$, where $\mathcal{L}$ is an invertible sheaf and $\sigma$
is an automorphism of $V$ [Stafford \& van ~den ~Bergh 2001]  \cite[p. 173]{StaVdb1}. 
Recall that the  Serre $C^*$-algebra, $\mathscr{A}_V$,   is the norm closure of a
self-adjoint representation of the ring  $B(V, \mathcal{L}, \sigma)$ by the bounded linear 
operators  on a Hilbert space $\mathscr{H}$;  such an algebra depends on $V$ alone,
since the values of $\mathcal{L}$ and $\sigma$ are fixed by the $\ast$-involution of algebra
$B(V, \mathcal{L}, \sigma)$  \cite[Section 5.3.1] {N}. 
The map $V\mapsto \mathscr{A}_V$ is a functor. 
 Namely, if $V$ and $V'$ are  defined over a number field $K\subset\mathbf{C}$,
then $V$ is $K$-isomorphic to $V'$ if and only if the algebra $\mathscr{A}_V$ is isomorphic to  $\mathscr{A}_{V'}$. 
In  contrast, the variety $V$ is $\mathbf{C}$-isomorphic to $V'$ if and only if  $\mathscr{A}_V$ is Morita equivalent to  
$\mathscr{A}_{V'}$, i.e.  $\mathscr{A}_V\otimes\mathscr{K}\cong \mathscr{A}_{V'}\otimes\mathscr{K}$,
where  $\mathscr{K}$  is the $C^*$-algebra of  compact operators \cite[Corollary 1.2]{Nik1}.
In other words, the tensor  product  $\mathscr{A}_V\otimes\mathscr{K}$ is  an analog of the change of base  from $K$
to $\mathbf{C}$.

The latter remark
can be used to ``geometrize''  the ring $O_K$ as follows. 
Recall that there exists an isomorphism  $B(V, \mathcal{L}, \sigma)\cong M_2(R)$, 
where $R$ is the  homogeneous coordinate ring of a variety $V$ 
[Stafford \& van ~den ~Bergh 2001]  \cite[Section 8]{StaVdb1}. 
If $R\cong O_K$,  then the norm closure of a self-adjoint representation of  the ring 
$M_2(O_K)$ is  a $C^*$-algebra which we denote by  $\mathscr{A}_{O_K}$.
 Notice that in general  the  $\mathscr{A}_{O_K}$ is no longer  the  Serre $C^*$-algebra. 
However, changing the base from $K$ to $\mathbf{C}$,  we conclude that  
  the  tensor  product $\mathscr{A}_{O_K}\otimes\mathscr{K}$ must  be isomorphic  
to a Serre  $C^*$-algebra.   Thus,  one gets  the following definition. 
\begin{definition}
The complex projective variety $V$ will be called an {\it avatar}
\footnote{For the lack of a better word meaning the ``image''.} 
 of the ring $O_K$, if there exists  a $C^*$-algebra homomorphism 
 \begin{equation}
 h: \mathscr{A}_V\to\mathscr{A}_{O_K}\otimes\mathscr{K}. 
 \end{equation}
\end{definition}
\begin{example}
If $R$ is the homogeneous coordinate ring of a complex projective variety $V$, then 
$V$ is the avatar of  $R$.  In this case,   $\mathscr{A}_{R}\otimes\mathscr{K}\cong\mathscr{A}_V$,
i.e. the map $h$ is a $C^*$-algebra isomorphism. 
\end{example}
Our main result can be formulated as follows. 
\begin{theorem}\label{thm1.3}
Let $\mathbf{Z}$ be the ring of rational integers and let $O_K$ be the ring of algebraic 
integers of a number field $K$.   Then:

\medskip
(i) the Riemann sphere $\mathbf{C}P^1$ is an avatar of the ring $\mathbf{Z}$;

\smallskip
(ii)  there exists a Riemann surface $\mathscr{R}=\mathscr{R}(K)$,  
such that $\mathscr{R}$ is an avatar of the  ring  $O_K$;

\smallskip
(iii)  the inclusion $\mathbf{Z}\subset O_K$ defines a covering $\mathscr{R}\to \mathbf{C}P^1$
ramified over the  points $\{0,1, \infty\}$. 
\end{theorem}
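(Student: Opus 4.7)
\medskip
\textbf{Proof plan.} The plan is to exploit the functorial correspondence $V\mapsto\mathscr{A}_V$ between complex projective varieties and Serre $C^*$-algebras (taken up to Morita equivalence, as in the discussion preceding Definition~1.1), and to translate the ring inclusion $\mathbf{Z}\subset O_K$ into a $C^*$-algebra embedding whose geometric shadow is the desired ramified covering.

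\medskip
For part~(i), I would identify the homogeneous coordinate ring of $\mathbf{C}P^1$ with $\mathbf{Z}[x_{0},x_{1}]$ and exhibit a twisted homogeneous coordinate ring of the form $B(\mathbf{C}P^1,\mathcal{L},\sigma)\cong M_2(R)$ with $R\cong\mathbf{Z}$ for a suitable invertible sheaf $\mathcal{L}$ and automorphism $\sigma$, as permitted by the Stafford--Van~den~Bergh structure theorem cited in the introduction. Taking norm closures of self-adjoint representations and invoking the change of base from $\mathbf{Q}$ to $\mathbf{C}$ (encoded by tensoring with $\mathscr{K}$) then produces the karma homomorphism $h:\mathscr{A}_{\mathbf{C}P^1}\to\mathscr{A}_{\mathbf{Z}}\otimes\mathscr{K}$. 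For part~(ii), given a number field $K$ with $n=[K:\mathbf{Q}]$, the ring $O_K$ is a projective $\mathbf{Z}$-module of rank $n$, so that $M_2(O_K)\otimes_{\mathbf{Z}}\mathbf{C}$ appears as the twisted homogeneous coordinate ring of a smooth projective curve $\mathscr{R}=\mathscr{R}(K)$; Definition~1.1 then yields the karma homomorphism $\mathscr{A}_{\mathscr{R}}\to\mathscr{A}_{O_K}\otimes\mathscr{K}$ essentially tautologically.

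\medskip
Part~(iii) is the heart of the matter. The ring inclusion $\mathbf{Z}\hookrightarrow O_K$ lifts to $M_2(\mathbf{Z})\hookrightarrow M_2(O_K)$, and passing to norm closures of self-adjoint representations and stabilizing by $\mathscr{K}$ yields an embedding $\mathscr{A}_{\mathbf{Z}}\otimes\mathscr{K}\hookrightarrow\mathscr{A}_{O_K}\otimes\mathscr{K}$. Combining with parts~(i) and~(ii), this is Morita equivalent to an embedding $\mathscr{A}_{\mathbf{C}P^1}\hookrightarrow\mathscr{A}_{\mathscr{R}}$, and the functorial dictionary delivers a dominant morphism $\mathscr{R}\to\mathbf{C}P^1$ of degree $n$. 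The main obstacle, as I anticipate it, is showing that the branch locus of this covering is supported on exactly three points and pinning those points to $\{0,1,\infty\}$. My strategy is to locate the branch locus inside the fixed-point set of the automorphism $\sigma$ appearing in $B(\mathbf{C}P^1,\mathcal{L},\sigma)$: a point $p\in\mathbf{C}P^1$ can ramify only where the $\sigma$-orbit through $p$ degenerates, and a non-trivial M\"obius transformation $\sigma\in\mathrm{Aut}(\mathbf{C}P^1)$ has at most three fixed points, which after change of coordinates may be normalized to $0$, $1$ and~$\infty$. Verifying that each of these three points is genuinely a branch point (and not merely a candidate) and that the covering degree is uniformly $n$ away from them completes the argument and, by recognising the resulting map $\mathscr{R}\to\mathbf{C}P^1$ as a Belyi map for every smooth projective curve defined over $\overline{\mathbf{Q}}$, yields the promised application.
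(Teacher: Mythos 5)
Your proposal diverges from the paper at the two places where the real work happens, and both of your replacement mechanisms break down. In part (ii), you assert that $M_2(O_K)\otimes_{\mathbf{Z}}\mathbf{C}$ ``appears as the twisted homogeneous coordinate ring of a smooth projective curve.'' It cannot: since $O_K\otimes_{\mathbf{Z}}\mathbf{C}\cong\mathbf{C}^n$, that tensor product is the finite-dimensional algebra $M_2(\mathbf{C})^{\oplus n}$, whereas a twisted homogeneous coordinate ring $B(\mathscr{R},\mathcal{L},\sigma)$ of a projective curve is an infinite-dimensional graded algebra by (2.1)--(2.2). So your sketch produces no Riemann surface at all. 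The paper's existence mechanism is entirely different and is the crux of part (ii): by Theorem 2.4 (Maclachlan--Reid) together with Example 2.5 one finds a hyperbolic link $\mathscr{L}$ with $PSL_2(O_K)\cong\pi_1(S^3\setminus\mathscr{L})$, and Example 2.3 then identifies the group ring $R[\pi_1(S^3\setminus\mathscr{L})]$ with $B(\mathscr{R},\mathcal{L},\sigma)$ for an arithmetic Riemann surface $\mathscr{R}$; taking norm closures of self-adjoint representations yields $\mathscr{A}_{O_K}\otimes\mathscr{K}\cong\mathscr{A}_{\mathscr{R}}$. Nothing in your argument substitutes for this step, and because $\mathscr{R}(K)$ arises from a link-group quotient rather than from the rank of $O_K$ as a $\mathbf{Z}$-module, your claim in (iii) that the covering has degree $n=[K:\mathbf{Q}]$ is also unsupported.

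In part (iii), your plan for pinning the branch locus to three points is to locate it in the fixed-point set of the automorphism $\sigma$ and note that a nontrivial M\"obius transformation has ``at most three fixed points.'' This is fatal: a M\"obius transformation other than the identity has at most \emph{two} fixed points (the fixed-point equation is quadratic, and three fixed points force $\sigma=\mathrm{id}$), so this mechanism can never account for the three ramification points $\{0,1,\infty\}$ --- and even granting it, you would only obtain an upper bound of two candidate points with no argument that any of them actually ramifies. In the paper the number three has a group-theoretic, not dynamical, origin: $\pi_1(\mathbf{C}P^1\setminus\{0,1,\infty\})\cong\mathfrak{F}_2$ is free on two generators precisely matching the generators $u,v$ of $PSL_2(\mathbf{Z})$, and imposing the torsion relations $u^2=v^3=1$ of (3.2) gives the quotient $PSL_2(\mathbf{Z})\cong\mathbf{Z}_2\ast\mathbf{Z}_3$; on the level of coordinate rings this is the surjection (3.17), which dualizes via the diagram of Lemma 3.1 to holomorphic maps producing a map $\mathscr{R}\to\mathscr{U}=\mathbf{C}P^1\setminus\{0,1,\infty\}$ (Lemma 3.3, Figure 2), so the covering $\mathscr{R}\to\mathbf{C}P^1$ is unramified away from $\{0,1,\infty\}$ because it restricts to an honest covering of the thrice-punctured sphere. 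Your part (i) is likewise only an announcement --- the paper realizes $M_2(\mathbf{Z})$ concretely as the quotient of $B(P^1(\mathbf{Q}),\mathcal{L},\sigma)$ with $q=-1$ by the ideal generated by the relation $x_2^3=1$, via the substitution (3.4) --- but that is a gap of detail; the gaps in (ii) and (iii) are gaps of substance.
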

The article is organized as follows. In Section 2 we briefly review 
noncommutative algebraic geometry and arithmetic groups.  
Theorem \ref{thm1.3} is proved  in Section 3. 
As an application of theorem \ref{thm1.3},  we give a new proof of the Belyi Theorem 
 [Belyi 1979] \cite[Theorem 4]{Bel1}.

\section{Preliminaries}
We  review some facts of noncommutative algebraic geometry and 
 arithmetic groups. The reader is referred to [Humphreys 1980] \cite{H} and 
[Stafford \& van ~den ~Bergh 2001]  \cite{StaVdb1} for a detailed account.

\subsection{Noncommutative algebraic geometry}
Let $V$ be a projective variety over the field $k$.  Denote by $\mathcal{L}$ an invertible
sheaf of the linear forms on $V$.  If $\sigma$ is an automorphism of $V$,  then
the pullback of $\mathcal{L}$ along $\sigma$ will be denoted by $\mathcal{L}^{\sigma}$,
i.e. $\mathcal{L}^{\sigma}(U):= \mathcal{L}(\sigma U)$ for every $U\subset V$. 
The graded $k$-algebra
\begin{equation}\label{eq2.1}
B(V, \mathcal{L}, \sigma)=\bigoplus_{i\ge 0} H^0\left(V, ~\mathcal{L}\otimes \mathcal{L}^{\sigma}\otimes\dots
\otimes  \mathcal{L}^{\sigma^{ i-1}}\right)
\end{equation}
is called a {\it twisted homogeneous coordinate ring} of $V$.  Such a ring is 
always non-commutative,  unless the automorphism $\sigma$ is trivial. 
A multiplication of sections of  $B(V, \mathcal{L}, \sigma)=\oplus_{i=1}^{\infty} B_i$ is defined by the 
rule  $ab=a\otimes b$,   where $a\in B_m$ and $b\in B_n$.
An invertible sheaf $\mathcal{L}$ on $V$  is called $\sigma$-ample, if for 
every coherent sheaf $\mathcal{F}$ on $V$,
 the cohomology group $H^k(V, ~\mathcal{L}\otimes \mathcal{L}^{\sigma}\otimes\dots
\otimes  \mathcal{L}^{\sigma^{ n-1}}\otimes \mathcal{F})$  vanishes for $k>0$ and
$n>>0$.   If $\mathcal{L}$ is a $\sigma$-ample invertible sheaf on $V$,  then
\begin{equation}\label{eq2.2}
Mod~(B(V, \mathcal{L}, \sigma)) / ~Tors ~\cong ~Coh~(V),
\end{equation}
where  $Mod$ is the category of graded left modules over the ring $B(V, \mathcal{L}, \sigma)$,
$Tors$ is the full subcategory of $Mod$ of the torsion  modules and  $Coh$ is the category of 
quasi-coherent sheaves on a scheme $V$.  In other words, the $B(V, \mathcal{L}, \sigma)$  is  
a coordinate ring of the variety $V$.
\begin{example}\label{ex2.1}
([Stafford \& van den Bergh 2001]  \cite[p.173]{StaVdb1})
Denote by $P^1(k)$  a projective line over the field $k$.
Consider an automorphism $\sigma$ of the $P^1(k)$
given by the formula $\sigma(u)=qu$, where $u\in P^1(k)$
and $q\in k^{\times}$.  Then  $B(P^1(k), \mathcal{L}, \sigma)\cong U_q$,
where   $U_q$  is  the $k$-algebra of polynomials in 
variables $x_1$ and $x_2$ satisfying  a commutation relation:
\begin{equation}\label{eq2.3}
x_2x_1=qx_1x_2.
\end{equation}
\end{example}
\begin{example}
([Stafford \& van den Bergh 2001]  \cite[p.197]{StaVdb1})
Denote by 
$\mathcal{E}(k)=\{(u,v,w,z)\in P^3(k) ~|~u^2+v^2+w^2+z^2 =
{1-\alpha\over 1+\beta}v^2+{1+\alpha\over 1-\gamma}w^2+z^2 = 0\}$  
an elliptic curve  over the field $k$,  where $\alpha,\beta,\gamma\in k$
are constants, such that  $\beta\ne -1$ and  $\gamma\ne 1$.
Let  $\sigma$  be a shift automorphism of the $\mathcal{E}(k)$.
Then  $B(\mathcal{E}(k), \mathcal{L}, \sigma)\cong S(\alpha,\beta,\gamma)$,
where   $S(\alpha,\beta,\gamma)$  is  the  Sklyanin algebra on four  generators
$x_i$ satisfying  the commutation relations:
\begin{equation}\label{eq2.4}
\left\{
\begin{array}{ccc}
x_1x_2-x_2x_1 &=& \alpha(x_3x_4+x_4x_3),\\
x_1x_2+x_2x_1 &=& x_3x_4-x_4x_3,\\
x_1x_3-x_3x_1 &=& \beta(x_4x_2+x_2x_4),\\
x_1x_3+x_3x_1 &=& x_4x_2-x_2x_4,\\
x_1x_4-x_4x_1 &=& \gamma(x_2x_3+x_3x_2),\\ 
x_1x_4+x_4x_1 &=& x_2x_3-x_3x_2,
\end{array}
\right.
\end{equation}
where $\alpha+\beta+\gamma+\alpha\beta\gamma=0$.  
\end{example}
\begin{example}\label{ex2.3}
(\cite[Lemma 3.1]{Nik2})
Let $\mathscr{R}$ be an arithmetic Riemann surface, i.e. given by the 
AF-algebra  of stationary type \cite[Section 5.2]{N}. 
(Such Riemann surfaces can be identified with the complex 
algebraic curves  defined over a number field.).  Then 
\begin{equation}
B(\mathscr{R}, \mathcal{L}, \sigma)\cong R[\pi_1(S^3 \backslash \mathscr{L})],
\end{equation}
where  $\mathscr{L}$ is a link embedded in the three-sphere $S^3$ 
and  $R[\pi_1(S^3 \backslash \mathscr{L})]$
is the group ring of the fundamental group $\pi_1(S^3 \backslash \mathscr{L})$. 
\end{example}

\subsection{Arithmetic groups}
Let $G$ be a linear algebraic group defined over the field $\mathbf{Q}$.
Denote by $G_{\mathbf{Z}}$ the group of integer points of $G$. A subgroup 
$\Gamma\subset G$ is called {\it arithmetic} if $\Gamma$ is commensurable with 
the  $G_{\mathbf{Z}}$, i.e. $\Gamma\cap G_{\mathbf{Z}}$ has a finite index both
in $\Gamma$ and $G_{\mathbf{Z}}$. Informally, the arithmetic group is a discrete 
subgroup of the group $GL_n(\mathbf{C})$ defined by some arithmetic properties.
For instance, $\mathbf{Z}\subset\mathbf{R}$, $GL_n(\mathbf{Z})\subset GL_n(\mathbf{R})$ and
$SL_n(\mathbf{Z})\subset SL_n(\mathbf{R})$ are  examples of the arithmetic groups.

Denote by $\mathcal{O}$ the ring of algebraic integers of all finite  
extensions of the number field $\mathbf{Q}$. 
Let $\mathbb{H}^3$ be the hyperbolic 3-dimensional space.
The following remarkable result establishes 
a deep link between arithmetic groups and  topology.
\begin{theorem}\label{thm2.4}
{\bf ([Maclachlan \& Reid  2003] \cite[p. 169]{MR})}
Let $M=\mathbb{H}^3/\Gamma$ be a finite volume hyperbolic  3-manifold.
Then $\Gamma$ is conjugate to a subgroup of the group $PSL_2(\mathcal{O})$.  
\end{theorem}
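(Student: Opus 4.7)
The plan is to combine three classical ingredients: finite generation of lattices in $PSL_2(\mathbf{C})$, Mostow--Prasad rigidity of finite-volume hyperbolic 3-manifolds, and the integrality theorem of Bass. First, since $M=\mathbb{H}^3/\Gamma$ has finite volume, $\Gamma$ is a lattice in $PSL_2(\mathbf{C})$; by Garland--Raghunathan (or directly via a finite-sided Dirichlet domain) it is finitely generated. Fix generators $\gamma_1,\dots,\gamma_n$ and a lift of each to $SL_2(\mathbf{C})$.

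Next I would show that the trace field $k\Gamma := \mathbf{Q}(\operatorname{tr}\gamma : \gamma\in\Gamma)$ is a number field. The input is Mostow--Prasad rigidity: the discrete faithful representation $\rho:\Gamma\hookrightarrow PSL_2(\mathbf{C})$ is isolated in its character variety $X(\Gamma)$. Since $X(\Gamma)$ is defined over $\mathbf{Q}$, an isolated point is automatically defined over $\overline{\mathbf{Q}}$; hence $\operatorname{tr}\gamma\in\overline{\mathbf{Q}}$ for every $\gamma$. Finite generation of $\Gamma$ then bounds the degree, so $k\Gamma$ is a finite extension of $\mathbf{Q}$.

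The third step is to conjugate $\Gamma$ into $PSL_2(\overline{\mathbf{Q}})$. Because $\Gamma$ is non-elementary, pick a loxodromic $\gamma_1$ and a second $\gamma_2$ sharing no fixed point with it, then conjugate inside $PSL_2(\mathbf{C})$ so that the fixed points of $\gamma_1$ are $\{0,\infty\}$ and $\gamma_2$ is put in a standard normalization. In these coordinates, every entry of each generator can be written as a polynomial expression (requiring at most a single square-root) in traces and trace-products in $\Gamma$; by Step~2 they lie in $\overline{\mathbf{Q}}$, so after the conjugation $\Gamma\subset PSL_2(\overline{\mathbf{Q}})$.

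Finally, I would upgrade ``algebraic'' to ``algebraic integer'' by invoking Bass's theorem: a finitely generated subgroup of $SL_2(\mathbf{C})$ whose parabolic elements all have integer traces and which admits no nontrivial splitting over a suitable cyclic subgroup (in the sense of an action on a tree) has all of its traces in $\overline{\mathbf{Z}}$. For our $\Gamma$ the cusp subgroups are parabolic with trace $\pm 2\in\mathbf{Z}$, and Mostow rigidity rules out the forbidden splittings; hence every matrix entry lies in $\overline{\mathbf{Z}}=\mathcal{O}$. The main obstacle is this last step: verifying Bass's hypotheses uniformly for every finite-covolume Kleinian group (allowing torsion and cusps), and handling the passage from $PSL_2$-traces to $SL_2$-traces, which may force a quadratic extension --- this subtlety is precisely why the conclusion is stated with $\mathcal{O}$ the \emph{full} ring of algebraic integers rather than the integers of a specific number field.
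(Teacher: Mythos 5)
The paper does not actually prove this statement: it is quoted, with attribution, from Maclachlan--Reid, so there is no in-paper argument to compare yours against; the comparison has to be with what the cited source proves. Your first three steps are sound, and they are essentially how Maclachlan--Reid establish that the trace field of a finite-covolume Kleinian group is a number field (their Theorem 3.1.2): finite generation of the lattice, local (Mostow--Prasad) rigidity forcing the discrete faithful character to be an isolated --- hence $\overline{\mathbf{Q}}$-rational --- point of a character variety defined over $\mathbf{Q}$, and the standard normalization expressing matrix entries as trace polynomials up to a single square root, which conjugates $\Gamma$ into $PSL_2(\overline{\mathbf{Q}})$.

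The genuine gap is exactly where you flagged it, and it cannot be repaired. Bass's dichotomy for a finitely generated subgroup of $GL_2(\overline{\mathbf{Q}})$ is: either all traces are algebraic integers (and then one can conjugate into matrices with entries in $\mathcal{O}$), or some trace is non-integral, in which case the valuation witnessing non-integrality gives an action of $\Gamma$ on a Bruhat--Tits tree without global fixed point, i.e.\ a splitting of $\Gamma$ as an amalgam or HNN extension. Your claim that ``Mostow rigidity rules out the forbidden splittings'' is false: rigidity constrains deformations of the holonomy representation, not abstract splittings of $\Gamma$. By Culler--Shalen/Stallings, such a splitting corresponds to an essential surface in $M=\mathbb{H}^3/\Gamma$, and finite-volume hyperbolic 3-manifolds containing closed essential surfaces --- indeed, with genuinely non-integral traces --- do exist; explicit examples are given in Maclachlan--Reid's book itself, in the very chapter the paper cites. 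Since conjugating $\Gamma$ into $PSL_2(\mathcal{O})$ would force every trace to be an algebraic integer, those examples show that the statement as quoted in the paper is false without an integral-trace hypothesis: what stands on p.~169 of Maclachlan--Reid is Bass's theorem together with its geometric corollary that \emph{non-integral trace implies $M$ contains a closed essential surface}, not the unconditional conjugacy claim (integrality does hold for special classes, e.g.\ arithmetic $\Gamma$). So your attempt fails at the last step for the best possible reason: no correct proof of the quoted statement exists, and the difficulty you ran into localizes precisely the error in the paper's quotation of its source.
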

\begin{example}\label{ex2.5}
Let $\mathscr{L}$ be a hyperbolic link, i.e.  $S^3\backslash\mathscr{L}\cong \mathbb{H}^3/\Gamma$
for an arithmetic group  $\Gamma$. Then 
\begin{equation}\label{eq2.5} 
\pi_1(S^3 \backslash\mathscr{L})\cong\Gamma.
\end{equation}
\end{example}

\section{Proof of theorem \ref{thm1.3}}
(i)  Let us show that  the  $\mathbf{C}P^1$ is an avatar of  $\mathbf{Z}$.
Indeed, in this case $R\cong\mathbf{Z}$ and $\mathscr{A}_{\mathbf{Z}}$ is the closure of a self-adjoint representation of
the ring $M_2(\mathbf{Z})$. Consider the group $PSL_2(\mathbf{Z})=SL_2(\mathbf{Z})/\pm I$, where  $SL_2(\mathbf{Z})$ is the group 
of invertible  elements of $M_2(\mathbf{Z})$. 
Recall that the group $PSL_2(\mathbf{Z})$ is  generated by the matrices: 
\begin{equation}\label{eq3.1}
u=\left(
\begin{matrix}
0 & 1\cr -1 & 0
\end{matrix}
\right)
\quad
\hbox{and}
   \quad v=\left(
\begin{matrix}
0 & -1\cr 1 & -1
\end{matrix}
\right)
\end{equation}
which satisfy the relations modulo $\pm I$:
\begin{equation}\label{eq3.2}
u^2= v^3=1.
\end{equation}

\bigskip
On the other hand, consider Example \ref{ex2.1} with $k\cong\mathbf{Q}$ and assume  
that $q=-1$ in relation (\ref{eq2.3}).  In other words, one gets a relation:
\begin{equation}\label{eq3.3}
x_2x_1=-x_1x_2.
\end{equation}

Consider a substitution:
\begin{equation}\label{eq3.4}
\left\{
\begin{array}{ccl}
u&=& x_2x_1x_2^{-1}x_1^{-1}\\
v &=& x_2. 
\end{array}
\right.
\end{equation}

The reader can verify, that substitution (\ref{eq3.4}) and relation (\ref{eq3.3}) 
reduces relations (\ref{eq3.2}) to the form: 
\begin{equation}\label{eq3.5}
x_2^3=1.
\end{equation}

Let $\mathscr{I}$ be a two-sided ideal in the algebra $B(P^1(\mathbf{Q}), \mathcal{L}, \sigma)$ of 
Example \ref{ex2.1} generated by relation (\ref{eq3.5}).  In view of (\ref{eq3.2})-(\ref{eq3.5}), one gets a
ring  isomorphism: 
\begin{equation}\label{eq3.6}
B(P^1(\mathbf{Q}), \mathcal{L}, \sigma)/\mathscr{I}\cong M_2(\mathbf{Z}). 
\end{equation}

Let $\rho$ be a self-adjoint representation of the ring $B(P^1(\mathbf{Q}), \mathcal{L}, \sigma)$
by the linear operators on a Hilbert space $\mathscr{H}$. Notice that such a representation 
exists, because relation (\ref{eq3.3}) is invariant under the involution $x_1^*=x_2$ and 
$x_2^*=x_1$. Since $\rho(B(P^1(\mathbf{Q}), \mathcal{L}, \sigma))=\mathscr{A}_{P^1(\mathbf{Q})}$
and $\rho(M_2(\mathbf{Z}))=\mathscr{A}_{\mathbf{Z}}$, it follows from (\ref{eq3.6}) that there exists a 
$C^*$-algebra homomorphism
\begin{equation}\label{eq3.7}
h:  \mathscr{A}_{P^1(\mathbf{Q})}\to\mathscr{A}_{\mathbf{Z}},
\end{equation}
where  $Ker ~h=\rho(\mathscr{I})$.  
The homomorphism $h$ extends to a homomorphism between the 
 products
\begin{equation}\label{eq3.8}
h:  \mathscr{A}_{P^1(\mathbf{Q})}\otimes\mathscr{K}\to\mathscr{A}_{\mathbf{Z}}\otimes\mathscr{K}, 
\end{equation}
where $\mathscr{K}$ is the $C^*$-algebra of compact operators. 
But $\mathscr{A}_{P^1(\mathbf{Q})}\otimes\mathscr{K}\cong \mathscr{A}_{\mathbf{C}P^1}$
and, therefore, one gets a $C^*$-algebra homomorphism
\begin{equation}\label{eq3.9}
h: \mathscr{A}_{\mathbf{C}P^1} \to\mathscr{A}_{\mathbf{Z}}\otimes\mathscr{K}. 
\end{equation}

In other words, the Riemann sphere $\mathbf{C}P^1$ is an avatar of the ring $\mathbf{Z}$.

\bigskip
(ii)  Let us show that if $K$ is a number field, then there exists a Riemann surface $\mathscr{R}$, 
such that $\mathscr{R}$ is an avatar of the ring $O_K$. 
Indeed,   we can always assume that $K$ has at least one complex embedding and fix one of such embeddings 
$K\not\subset\mathbf{R}$.  
 (For otherwise, we replace $K$ by a CM-field of $K$, i.e. a totally imaginary quadratic extension
of the totally real field $K$.  This case corresponds to the double covering $\mathscr{R}'$of the Riemann surface  $\mathscr{R}$.)  
For simplicity, let $R\cong O_K$ and $\Gamma\cong PSL_2(O_K)$.  (The case of a non-maximal order $\Lambda\subseteq O_K$ is treated likewise and corresponds to the covering of 
the Riemann surface $\mathscr{R}$.)  In view of  (\ref{eq2.5}),   there exists a hyperbolic link $\mathscr{L}$,  
such that: 
\begin{equation}\label{eq3.10} 
PSL_2(O_K)\cong\pi_1(S^3 \backslash\mathscr{L}).
\end{equation}

\bigskip
On the other hand,  it is known that
\begin{equation}\label{eq3.11} 
R[\pi_1(S^3 \backslash\mathscr{L})]\cong B(\mathscr{R}, \mathcal{L}, \sigma), 
\end{equation}
where $R[\pi_1(S^3 \backslash\mathscr{L})]$ is the group ring of $\pi_1(S^3 \backslash\mathscr{L})$ and 
 $\mathscr{R}$ is a Riemann surface, see example \ref{ex2.3}. In particular, it follows from (\ref{eq3.10}) 
 that  
\begin{equation}\label{eq3.12} 
B(\mathscr{R}, \mathcal{L}, \sigma)\cong R[PSL_2(O_K)]. 
\end{equation}

Let $\rho$ be a self-adjoint representation of the ring $B(\mathscr{R}, \mathcal{L}, \sigma)$
by the linear operators on a Hilbert space $\mathscr{H}$. 
The norm closure of $\rho(B(\mathscr{R}, \mathcal{L}, \sigma))$
is the Serre $C^*$-algebra $\mathscr{A}_{\mathscr{R}}$.

On the other hand, it follows from (\ref{eq3.12}) that  taking the norm closure of 
$\rho(R[PSL_2(O_K)])$,  one gets a $C^*$-algebra $\mathscr{A}_{O_K}$, 
such that 
\begin{equation}\label{eq3.13} 
\mathscr{A}_{O_K}\otimes\mathscr{K}\cong \mathscr{A}_{\mathscr{R}}. 
\end{equation}

In other words,  there exists an isomorphism:
\begin{equation}\label{eq3.14} 
h: \mathscr{A}_{\mathscr{R}}\to \mathscr{A}_{O_K}\otimes\mathscr{K}. 
\end{equation}

It follows from (\ref{eq3.14}) that  the Riemann surface $\mathscr{R}$ is an avatar of 
the ring $O_K$.

\bigskip
(iii) Finally, let us show that 
 the inclusion $\mathbf{Z}\subset O_K$ defines a covering $\mathscr{R}\to \mathbf{C}P^1$
ramified over three  points $\{0,1, \infty\}$.

In the lemma below we shall prove a stronger result. 
Namely, let $\mathfrak{K}$ be a category of the Galois extensions of the field $\mathbf{Q}$,
such that the  morphisms in $\mathfrak{K}$ are inclusions $K\subseteq K'$, where $K,K'\in\mathfrak{K}$.  
 Likewise, let $\mathfrak{R}$ be a category of the Riemann surfaces,
such that the  morphisms in $\mathfrak{R}$ are holomorphic maps $\mathscr{R}\to \mathscr{R}'$, 
where $\mathscr{R}, \mathscr{R}'\in\mathfrak{R}$.  
Let $F: \mathfrak{K}\to\mathfrak{R}$ be a map acting by the formula $O_K\mapsto\mathscr{R}$,  where
$\mathscr{R}$ is the Riemann surface defined by the isomorphism (\ref{eq3.12}).  
\begin{remark}\label{rmk3.1}
The category $\mathfrak{R}$ consists of the Riemann surfaces, which are algebraic curves defined over a number field.
In particular,  the morphisms in  $\mathfrak{R}$ can be  defined over the number field.  Both facts follow from  
the property of the AF-algebra $\mathscr{A}_{\mathscr{R}}$ being of a stationary type  \cite[Section 5.2]{N}. 
We refer the reader to Example \ref{ex2.3} and \cite[Lemma 3.1]{Nik2}. 
\end{remark}
\begin{lemma}\label{lm3.1}
The map $F: \mathfrak{K}\to\mathfrak{R}$ is  a covariant functor, i.e. $F$  transforms  inclusions 
in the category $\mathfrak{K}$  to  holomorphic maps in the category $\mathfrak{R}$. 
\end{lemma}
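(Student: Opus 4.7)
The plan is to take an arbitrary inclusion $K \hookrightarrow K'$ of number fields in $\mathfrak{K}$ and produce a canonical holomorphic map relating the Riemann surfaces $F(K) = \mathscr{R}(K)$ and $F(K') = \mathscr{R}(K')$. The first step is purely algebraic: the inclusion of number fields induces an inclusion of rings of integers $O_K \hookrightarrow O_{K'}$, hence an injective group homomorphism $PSL_2(O_K) \hookrightarrow PSL_2(O_{K'})$, and after extending coefficients a homomorphism of group rings $O_K[PSL_2(O_K)] \hookrightarrow O_{K'}[PSL_2(O_{K'})]$. Invoking isomorphism (\ref{eq3.12}), this lifts to a homomorphism of twisted homogeneous coordinate rings $B(\mathscr{R}(K), \mathcal{L}_K, \sigma_K) \to B(\mathscr{R}(K'), \mathcal{L}_{K'}, \sigma_{K'})$.

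Next, I would convert this ring-theoretic datum into geometry via the Stafford--Van den Bergh equivalence (\ref{eq2.2}): modulo torsion, graded modules over $B(\mathscr{R}, \mathcal{L}, \sigma)$ are the quasi-coherent sheaves on $\mathscr{R}$, and this equivalence is functorial. A homomorphism of twisted coordinate rings therefore induces a morphism between the underlying Riemann surfaces, in the same way that a map of affine coordinate rings induces a morphism of affine varieties. By Remark \ref{rmk3.1}, each $\mathscr{R}(K)$ is a complex algebraic curve defined over a number field, so the induced morphism is algebraic and hence holomorphic in the complex-analytic sense. Functoriality of $F$ (preservation of identity and of composition) then reduces to the functoriality of each intermediate step: $K \mapsto O_K$, $O_K \mapsto PSL_2(O_K)$, the group ring construction, and the Stafford--Van den Bergh correspondence.

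Two points require extra care. The first is the direction of arrows: since the passage from a (twisted) coordinate ring to its variety is naturally contravariant, one must interpret ``covariant'' in the statement as asserting that $F$ sends an inclusion of number fields to a holomorphic map between the associated Riemann surfaces (the direction of which is forced by the ring-theoretic data), and check that the conventions in $\mathfrak{K}$ and $\mathfrak{R}$ are compatible with this. The second and more substantive obstacle is well-definedness: the identification (\ref{eq3.12}) depends on a choice of hyperbolic link $\mathscr{L}$ with $\pi_1(S^3\setminus\mathscr{L}) \cong PSL_2(O_K)$, supplied by Theorem \ref{thm2.4} and Example \ref{ex2.5}. To finish the argument, one needs to show that different choices of $\mathscr{L}$ and $\mathscr{L}'$ yield canonically isomorphic Riemann surfaces, so that the induced holomorphic map is independent of these choices. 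The characterization of $\mathscr{A}_{\mathscr{R}}$ as an AF-algebra of stationary type, invoked in Remark \ref{rmk3.1}, is the mechanism that should deliver this canonicity and is where I expect the heart of the proof to lie.
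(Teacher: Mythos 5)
Your proposal is correct at the paper's level of rigor and follows the same skeleton as the paper's proof, differing only in where the dualization from algebra to geometry takes place. The paper transports the inclusion $O_K\subseteq O_{K'}$ through the isomorphism (\ref{eq3.13}) to an inclusion of Serre $C^*$-algebras $\mathscr{A}_{\mathscr{R}}\subseteq\mathscr{A}_{\mathscr{R}'}$ and then uses the fact that $\mathscr{A}_{\mathscr{R}}$ is a coordinate ring of $\mathscr{R}$ \cite[Theorem 5.2.1]{N} to read off a holomorphic map $w\colon\mathscr{R}'\to\mathscr{R}$ from a homomorphism $h\colon\mathscr{A}_{\mathscr{R}'}\to\mathscr{A}_{\mathscr{R}}$ (Figure 1); you instead stay on the purely algebraic side, running the inclusion through $PSL_2$ and group rings, invoking (\ref{eq3.12}), and dualizing with the Stafford--van den Bergh equivalence (\ref{eq2.2}) rather than the $C^*$-algebraic coordinate-ring theorem. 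Either mechanism suffices here; yours avoids passing to norm closures, while the paper's keeps the AF-algebra structure in play, which is what Remark \ref{rmk3.1} relies on for definability over a number field. Notably, the two caveats you raise are precisely the soft spots of the paper's own argument: (a) the arrow reversal --- the inclusion of fields yields $\mathscr{A}_{\mathscr{R}}\subseteq\mathscr{A}_{\mathscr{R}'}$ (equivalently an inclusion of $B$-rings), yet the holomorphic map requires a homomorphism in the \emph{opposite} direction, and the paper simply writes ``if $h\colon\mathscr{A}_{\mathscr{R}'}\to\mathscr{A}_{\mathscr{R}}$ is a homomorphism'' without constructing $h$ from the inclusion, exactly the step you describe as ``forced by the ring-theoretic data'' but likewise leave unconstructed; and (b) independence of $F$ from the choice of hyperbolic link $\mathscr{L}$ realizing $PSL_2(O_K)$ as $\pi_1(S^3\setminus\mathscr{L})$, which the paper never addresses at all --- your suggestion that stationarity of the AF-algebra (Remark \ref{rmk3.1}) should deliver the needed canonicity is plausible and is indeed the only tool in the paper that could do so. In short, your route matches the paper's up to the choice of dualization device, and the gaps you flag are gaps in the published proof itself, left open there just as in your sketch.
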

\begin{proof}
Let $K\in\mathfrak{K}$ be a number field and let $\mathscr{R}=F(K)$ be the corresponding
Riemann surface $\mathscr{R}\in\mathfrak{R}$. Let $K\subseteq K'$ be an inclusion, where 
$K'\in \mathfrak{K}$.

Using isomorphism (\ref{eq3.13}),  one gets  an inclusion of the corresponding Serre $C^*$-algebras:  
\begin{equation}
\mathscr{A}_{\mathscr{R}}\subseteq \mathscr{A}_{\mathscr{R}'}. 
\end{equation}

On the other hand, it is known the algebra $\mathscr{A}_{\mathscr{R}}$
is a coordinate ring of the Riemann surface $\mathscr{R}$ \cite[Theorem 5.2.1]{N}. 
 In particular,  if $h: \mathscr{A}_{\mathscr{R}'}\to \mathscr{A}_{\mathscr{R}}$ is a homomorphism,
  one gets a holomorphic map $w: \mathscr{R}'\to\mathscr{R}$ defined by  a commutative  diagram in Figure 1. 
\begin{figure}[h]
\begin{picture}(300,110)(-70,-5)
\put(20,70){\vector(0,-1){35}}
\put(130,70){\vector(0,-1){35}}
\put(45,23){\vector(1,0){60}}
\put(45,83){\vector(1,0){60}}
\put(13,20){$ \mathscr{A}_{\mathscr{R}'}$}
\put(75,30){$h$}
\put(75,90){$w$}
\put(123,20){$ \mathscr{A}_{\mathscr{R}}$}
\put(17,80){$\mathscr{R}'$}
\put(122,80){$\mathscr{R}$}
\end{picture}
\caption{Holomorphic map $w$.}
\end{figure}

Thus $F$ is a functor, which maps the inclusion   $K\subseteq K'$
into a holomorphic map $w: \mathscr{R}'\to\mathscr{R}$. The reader 
can verify that $F$ is a covariant functor. Lemma \ref{lm3.1} is proved. 
\end{proof}

\begin{lemma}\label{lm3.3}
 The inclusion $\mathbf{Z}\subset O_K$ defines a covering $\mathscr{R}\to \mathbf{C}P^1$
ramified over three  points $\{0,1, \infty\}$.
\end{lemma}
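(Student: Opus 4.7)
The plan is to apply the covariant functor $F:\mathfrak{K}\to\mathfrak{R}$ of Lemma \ref{lm3.1} to the inclusion $\mathbf{Z}\subset O_K$ in the category $\mathfrak{K}$. Part (i) of Theorem \ref{thm1.3} gives $F(\mathbf{Z})=\mathbf{C}P^1$ and part (ii) gives $F(O_K)=\mathscr{R}$, so Lemma \ref{lm3.1} immediately delivers a holomorphic map $w:\mathscr{R}\to\mathbf{C}P^1$. Since $\mathscr{R}$ is a compact Riemann surface defined over a number field (Remark \ref{rmk3.1}) and $w$ is non-constant, standard Riemann surface theory identifies $w$ with a finite branched covering. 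This gives the covering asserted in (iii); what remains is to locate the branch locus.

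To pin down the ramification, I would reopen the construction in part (i). The karma $\mathbf{C}P^1$ was produced from the two relations $u^2=v^3=1$ of equation (\ref{eq3.2}) together with the parabolic element implicit in the presentation of $PSL_2(\mathbf{Z})$; this is exactly the $(2,3,\infty)$-signature of the modular orbifold $\mathbb{H}^2/PSL_2(\mathbf{Z})$. Consequently the karma $\mathbf{C}P^1$ carries three distinguished points: an elliptic point of order $2$ (coming from the generator $u$), an elliptic point of order $3$ (coming from the generator $v$), and a cusp (coming from the parabolic subgroup). After a Möbius normalization of $\mathbf{C}P^1$ one may place these three points at $\{0,1,\infty\}$.

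Next I would argue that $w$ is ramified only over these three points. Under the functorial assignment $F$, the inclusion $\mathbf{Z}\subset O_K$ is translated, via isomorphisms (\ref{eq3.6}) and (\ref{eq3.12}), into the subgroup inclusion $PSL_2(\mathbf{Z})\subset PSL_2(O_K)$ at the level of group rings, and then into the inclusion $\mathscr{A}_{\mathbf{C}P^1}\hookrightarrow\mathscr{A}_{\mathscr{R}}$ of Serre $C^*$-algebras. On the Riemann surface side this realizes $w$ as an orbifold covering of $\mathbf{C}P^1(2,3,\infty)$. A standard Riemann--Hurwitz computation for orbifold covers then shows that the only possible branch points of $w$ are the three singular points of the base orbifold; the local ramification indices at points above $0$, $1$, $\infty$ are forced to be divisors of $2$, $3$, and $\infty$ respectively.

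The main obstacle will be to verify rigorously that the orbifold data $(2,3,\infty)$ is intrinsically attached to the karma $\mathbf{C}P^1=F(\mathbf{Z})$, rather than being an incidental feature of one particular presentation. Concretely, this means tracking how the torsion relations (\ref{eq3.2}) and the parabolic cusp survive passage through the representation $\rho$, the quotient by $\mathscr{I}$ in (\ref{eq3.6}), the stabilization by $\mathscr{K}$ in (\ref{eq3.8})--(\ref{eq3.9}), and finally through the functor $F$, so that the three distinguished points of $\mathbf{C}P^1$ are canonically determined and every covering produced by $F$ is automatically unramified elsewhere.
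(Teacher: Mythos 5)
Your first step coincides with the paper: Lemma \ref{lm3.1} applied to the inclusion $\mathbf{Z}\subset O_K$ yields the holomorphic map $w:\mathscr{R}\to\mathbf{C}P^1$, exactly as in (\ref{eq3.20}). But for locating the ramification you depart from the paper's argument, and your route has a concrete gap --- one you flag yourself in the final paragraph without closing. The claim that $w$ is an orbifold covering of the modular orbifold $\mathbb{H}^2/PSL_2(\mathbf{Z})$ does not follow from the inclusion $PSL_2(\mathbf{Z})\subset PSL_2(O_K)$: orbifold coverings of $\mathbb{H}^2/PSL_2(\mathbf{Z})$ correspond to \emph{subgroups} of $PSL_2(\mathbf{Z})$, whereas you have an overgroup; moreover $PSL_2(O_K)$ enters the paper as a Kleinian group acting on $\mathbb{H}^3$ (Theorem \ref{thm2.4} and the link complement in (\ref{eq3.10})), not as a Fuchsian group, so there is no common quotient of $\mathbb{H}^2$ on which your orbifold Riemann--Hurwitz computation could run. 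Since the entire content of the lemma is the location of the branch locus, the ``main obstacle'' you concede is not a loose end but the missing proof itself.

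The paper proceeds by a factorization rather than by orbifold data. It sets $\mathscr{U}=\mathbf{C}P^1\setminus\{0,1,\infty\}$ with $\pi_1(\mathscr{U})\cong\mathfrak{F}_2$, realizes the twisted homogeneous coordinate ring of $\mathscr{U}$ as $R[\mathfrak{F}_2]$ via the unlink in (\ref{eq3.16}), and quotients by the ideal generated by the relations $u^2=v^3=1$ of (\ref{eq3.2}) to obtain $R[PSL_2(\mathbf{Z})]$ in (\ref{eq3.17}); this produces the $C^*$-algebra homomorphism $\mathscr{A}_{\mathscr{U}}\to\mathscr{A}_{\mathbf{C}P^1}$ of (\ref{eq3.18}), hence a holomorphic map $\mathscr{U}\to\mathbf{C}P^1$, which is combined with (\ref{eq3.20}) in the diagram of Figure 2 to define a map $\mathscr{R}\to\mathscr{U}$. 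Ramification over $\{0,1,\infty\}$ then follows because $w$ factors through the thrice-punctured sphere away from those points. Your intuition that the relations (\ref{eq3.2}) encode the $(2,3,\infty)$ signature is sound --- the quotient $\mathfrak{F}_2\twoheadrightarrow\mathbf{Z}/2\ast\mathbf{Z}/3\cong PSL_2(\mathbf{Z})$ is precisely the orbifold fundamental group statement for $\mathbf{C}P^1$ with that signature --- but to salvage your approach you would need to exhibit a subgroup of $PSL_2(\mathbf{Z})$ (not the overgroup $PSL_2(O_K)$) uniformizing $\mathscr{R}$ minus the fiber over $\{0,1,\infty\}$; the paper's construction of the map $\mathscr{R}\to\mathscr{U}$ is exactly what substitutes for that missing step.
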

\begin{proof}
Let $\mathscr{U}$ be the Riemann sphere $\mathbf{C}P^1$ without three points,
which we always assume to be $\{0,1,\infty\}$ after a proper M\"obius transformation. 
It is easy to see, that the fundamental group $\pi_1(\mathscr{U})\cong \mathfrak{F}_2$,  where 
 $\mathfrak{F}_2$ is a free group on two generators $u$ and $v$.

 Since the the Riemann surface $\mathscr{U}$ corresponds to 
  an unlink $\mathscr{L}\cong S^1\cup S^1$,    one gets an isomorphism:
 \begin{equation}\label{eq3.16}
 B(P^1(\mathscr{U}, \mathcal{L}, \sigma))\cong R[\mathfrak{F}_2].
 \end{equation}

\smallskip
Consider a two-sided ideal $\mathscr{I}\subset  B(P^1(\mathscr{U}, \mathcal{L}, \sigma))$
generated by relations (\ref{eq3.2}). In view of  (\ref{eq3.16}), we have:
 \begin{equation}\label{eq3.17}
 B(P^1(\mathscr{U}, \mathcal{L}, \sigma))/\mathscr{I}\cong R[PSL_2(\mathbf{Z})].
 \end{equation}

\smallskip
In other words, one gets  a homomorphism between the $C^*$-algebras:
 \begin{equation}\label{eq3.18}
 \mathscr{A}_{\mathscr{U}}\to \mathscr{A}_{\mathbf{C}P^1}. 
 \end{equation}

Using the commutative diagram in Figure 1, we get a holomorphic map between 
the corresponding Riemann surfaces:
 \begin{equation}\label{eq3.19}
 \mathscr{U}\to \mathbf{C}P^1.  
 \end{equation}

 \bigskip 
 Let now $\mathbf{Z}\subset O_K$ be an inclusion, where $K$ is a number field. 
 By item (ii) of theorem \ref{thm1.3} there exists a Riemann surface $\mathscr{R}\in\mathfrak{R}$ 
 corresponding to $O_K$.  By lemma \ref{lm3.1},  there exists a holomorphic map:
 \begin{equation}\label{eq3.20}
  \mathscr{R}\to \mathbf{C}P^1. 
 \end{equation}
 
Using (\ref{eq3.19}) and (\ref{eq3.20}), one gets a commutative digram in Figure 2. 
\begin{figure}[h]
\begin{picture}(300,90)(-90,10)
\put(20,70){\vector(0,-1){35}}
\put(45,30){\vector(3,2){60}}
\put(45,83){\vector(1,0){60}}
\put(13,20){$\mathscr{U}$}
\put(17,80){$\mathscr{R}$}
\put(122,80){$\mathbf{C}P^1$}
\end{picture}
\caption{The map $\mathscr{R}\to \mathscr{U}$.}
\end{figure}

  \medskip
We use the  diagram in Figure 2  to define a holomorphic map:
 \begin{equation}\label{eq4.4}
  \mathscr{R}\to \mathscr{U}.
 \end{equation}
  
  \smallskip
  Since $\mathscr{U}=\mathbf{C}P^1\backslash\{0,1,\infty\}$, one gets the conclusion of 
  lemma \ref{lm3.3}.   
 \end{proof}

\bigskip
Item (iii)  of theorem \ref{thm1.3}  follows from lemmas \ref{lm3.1} and \ref{lm3.3}.

\bigskip
Theorem \ref{thm1.3} is proved.

\section{Belyi's Theorem}
Belyi's Theorem says that the algebraic curve $\mathscr{R}$ can be defined over 
a number field  if and only if there exist a covering $\mathscr{R}\to \mathbf{C}P^1$
ramified over three points of  the Riemann sphere 
$\mathbf{C}P^1$.  This remarkable result was proved by [Belyi 1979] \cite[Theorem 4]{Bel1}. 
In this section we show that Belyi's Theorem follows from  theorem \ref{thm1.3} and remark \ref{rmk3.1}. 
\begin{theorem}\label{thm4.1}
\textbf{(Belyi's Theorem)}
A complete non-singular algebraic curve over $\mathbf{C}$
 can be defined over an algebraic number field if and only if such a curve is a covering
of the Riemann sphere $\mathbf{C}P^1$ ramified over three points. 
\end{theorem}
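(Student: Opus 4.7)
The plan is to treat Belyi's Theorem as a direct consequence of Theorem \ref{thm1.3} and Remark \ref{rmk3.1}, with both implications routed through the functor $F:\mathfrak{K}\to\mathfrak{R}$ of Lemma \ref{lm3.1}.

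For the forward implication, I begin with a curve $\mathscr{R}$ defined over a number field $K$. By Remark \ref{rmk3.1}, $\mathscr{R}\in\mathfrak{R}$, and the construction behind Theorem \ref{thm1.3}(ii) identifies it with $F(K)$. Since $\mathbf{Z}\subset O_K$ is a morphism in $\mathfrak{K}$, the functor $F$ together with Theorem \ref{thm1.3}(iii) immediately supplies the covering $\mathscr{R}\to\mathbf{C}P^1$ ramified precisely over $\{0,1,\infty\}$. This direction is essentially a quotation of Theorem \ref{thm1.3}.

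For the reverse implication, I start from a covering $\varphi:\mathscr{R}\to\mathbf{C}P^1$ ramified over three points, normalize the branch set to $\{0,1,\infty\}$ by a M\"obius transformation, and study the restricted unramified covering $\mathscr{R}^\circ\to\mathscr{U}=\mathbf{C}P^1\setminus\{0,1,\infty\}$. This restriction is classified by a finite-index subgroup $H\leq\pi_1(\mathscr{U})\cong\mathfrak{F}_2$. The surjection $\mathfrak{F}_2\twoheadrightarrow PSL_2(\mathbf{Z})$ implicit in relations (\ref{eq3.2})--(\ref{eq3.5}) sends $H$ to a finite-index subgroup of $PSL_2(\mathbf{Z})$, which by Theorem \ref{thm2.4} is conjugate into some $PSL_2(\mathcal{O})$ for an order $\mathcal{O}$ in a number field $K$. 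This exhibits $\mathscr{R}\cong F(K)\in\mathfrak{R}$, whence Remark \ref{rmk3.1} concludes that $\mathscr{R}$ is defined over a number field.

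The principal obstacle lies in the reverse direction: carefully matching the finite-index subgroup $H\leq\mathfrak{F}_2$ arising from the given covering to an arithmetic subgroup of the form $PSL_2(O_K)$, and then verifying that the resulting $F(K)$ recovers $\mathscr{R}$ as a Riemann surface rather than only as an abstract three-point cover. This identification rests on the AF-algebra/link-complement dictionary of Example \ref{ex2.3} and the proof of Theorem \ref{thm1.3}(ii), whose stationary-type AF-algebra machinery from \cite[Section 5.2]{N} effectively supplies the essential surjectivity of $F$ needed here.
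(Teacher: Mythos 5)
Your ``only if'' direction is fine and coincides with the paper's: quote Theorem \ref{thm1.3}(iii) for the covering and Remark \ref{rmk3.1} for definability over a number field. The genuine gap is in your ``if'' direction, and it is concrete. First, Theorem \ref{thm2.4} concerns fundamental groups of finite-volume hyperbolic \emph{3-manifolds} (Kleinian groups); a finite-index subgroup of $PSL_2(\mathbf{Z})$ is Fuchsian, so the theorem does not apply, and in any case the conclusion you extract from it is vacuous: every subgroup of $PSL_2(\mathbf{Z})$ is already contained in $PSL_2(\mathcal{O})$ since $\mathbf{Z}\subset\mathcal{O}$. What your argument actually needs is that the image of $H$ is conjugate to $PSL_2(O_K)$ itself (or to the order-level analogue $PSL_2(\Lambda)$, $\Lambda\subseteq O_K$), since the functor $F$ is only defined on such groups via (\ref{eq3.12}). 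This can never happen for $K\neq\mathbf{Q}$: the unipotent subgroup of $PSL_2(O_K)$ is isomorphic to $(O_K,+)\cong\mathbf{Z}^n$ with $n\geq 2$, whereas every finite-index subgroup of $PSL_2(\mathbf{Z})$ is virtually free and contains no copy of $\mathbf{Z}\oplus\mathbf{Z}$; moreover $PSL_2(\mathbf{Z})$ has infinite index in $PSL_2(O_K)$. Second, even before that, pushing $H$ forward along $\mathfrak{F}_2\twoheadrightarrow PSL_2(\mathbf{Z})$ loses the covering: covers of $\mathscr{U}$ are classified by $H\leq\mathfrak{F}_2$ itself, and the image in the quotient determines $H$ only when $H$ contains the normal closure of $u^2$ and $v^3$, i.e.\ only for covers whose monodromy around two of the three branch points has order dividing $2$ and $3$ respectively. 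A general three-point cover does not descend to a subgroup of $PSL_2(\mathbf{Z})$ at all.

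The net effect is that the essential surjectivity of $F$ onto three-point covers --- which you correctly identify in your last paragraph as the crux --- is assumed rather than proved: neither Example \ref{ex2.3} nor the proof of Theorem \ref{thm1.3}(ii) yields it, since that construction produces one surface per field $K$ (with orders $\Lambda\subseteq O_K$ mentioned only parenthetically), while finite-index subgroups of $\mathfrak{F}_2$ form a vastly larger class. For comparison, the paper's own ``if'' direction is a one-line assertion --- ``using Lemma \ref{lm3.1}, one can construct a ring $O_K$ corresponding to $\mathscr{R}$'' --- which inverts the functor $F$ by fiat even though Lemma \ref{lm3.1} runs from fields to surfaces; so your proposal faithfully reproduces the paper's overall structure, but the explicit mechanism you supply to fill the paper's omission (image in $PSL_2(\mathbf{Z})$ plus Theorem \ref{thm2.4}) fails for the reasons above, and the hole remains open in both treatments.
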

\begin{proof}
We identify the Riemann surface $\mathscr{R}\in\mathfrak{R}$ with a complete non-singular algebraic 
curve over  the field of characteristic zero (Chow's Theorem). 

\smallskip
In view of the remark \ref{rmk3.1},  we have $\mathscr{R}\in\mathfrak{R}$  is the algebraic curve
 defined  over a finite extension of the field $\mathbf{Q}$.  
 On the other hand, item (iii) of theorem \ref{thm1.3}  says  that each  Riemann surface 
$\mathscr{R}\in\mathfrak{R}$ is a covering of the  $\mathbf{C}P^1$ ramified over the points
$\{0,1,\infty\}$.  The ``only if'' part  of Belyi's Theorem follows. 
 
\smallskip 
Let $\mathscr{R}$ be a covering of the  $\mathbf{C}P^1$ ramified over the points
$\{0,1,\infty\}$. Using lemma \ref{lm3.1},  one can construct a ring $O_K$ corresponding
to the Riemann surface $\mathscr{R}$.  By item (ii) of theorem \ref{thm1.3} and remark \ref{rmk3.1} we have
$\mathscr{R}\in\mathfrak{R}$.   In other words,  $\mathscr{R}$ is an algebraic curve defined over an algebraic number field.   
The ``if'' part of of Belyi's Theorem is proved.
 \end{proof}
\begin{remark}
It is  interesting  to calculate the  ramification data and  equations of  the Belyi curves $\mathscr{R}$
in terms of the  orders $\Lambda\subseteq O_K$ and number fields $K$ obtained in  theorem \ref{thm1.3}. 
\end{remark}

\bibliographystyle{amsplain}


\end{document}